\documentclass[12pt]{article}
\input epsf.tex


\usepackage{amsmath}
\usepackage{amsthm}
\usepackage{amsfonts}
\usepackage{amssymb}
\usepackage{graphicx}
\usepackage{latexsym}

\usepackage{amsmath,amsthm,amsfonts,amssymb}

\usepackage{epsfig}

\usepackage{amssymb}
\usepackage[all]{xy}
\xyoption{poly}
\usepackage{fancyhdr}
\usepackage{wrapfig}



\theoremstyle{plain}
\newtheorem{thm}{Theorem}[section]
\newtheorem{prop}[thm]{Proposition}
\newtheorem{lem}[thm]{Lemma}

\theoremstyle{definition}
\newtheorem{defn}{Definition}
\theoremstyle{remark}
\newtheorem{remark}{Remark}



\topmargin 15pt
\advance \topmargin by -\headheight
\advance \topmargin by -\headsep
\textheight 8.6in
\oddsidemargin 0pt
\evensidemargin \oddsidemargin
\marginparwidth 0.5in
\textwidth 6.5in

\def\cc{{\curvearrowright}}

\def\F{{\mathbb F}}

\def\bG{{\bar G}}

\def\N{{\mathbb N}}

\def\chix{{\raise.5ex\hbox{$\chi$}}}

\def\bX{{\overline X}}

\def\bY{{\overline Y}}

\def\Cay{{\textrm{Cay}}}

\newcommand{\ilim}{\mathop{\varprojlim}\limits}
\newcommand{\End}{\operatorname{End}}

\begin{document}
\title{Integrable orbit equivalence rigidity for free groups}
\author{Lewis Bowen\footnote{supported in part by NSF grant DMS-1500389, NSF CAREER Award DMS-0954606} \\ University of Texas at Austin}
\maketitle
\begin{abstract}
It is shown that every accessible group which is integrable orbit equivalent to a free group is virtually free. Moreover, we also show that any integrable orbit-equivalence between finitely generated groups extends to their end compactifications.
\end{abstract}

\noindent

\noindent
\tableofcontents

\section{Introduction}

Measure equivalence (ME) is an equivalence relation on groups, introduced by M. Gromov  \cite{gromov-1993} as a measure-theoretic counterpart to quasi-isometry. Most of the research in this area has focussed on rigidity phenomena. For example, Furman proved \cite{furman-1999a, furman-1999b} that any group ME to a lattice in a higher rank simple Lie group has a finite normal subgroup whose quotient is commensurable to a lattice in the same Lie group. See \cite{furman-me-survey} for a survey of further results.

Here we consider the class of free groups. This class is far from rigid: there is a large variety of groups measure-equivalent to a free group \cite{gaboriau-2005} and we do not even have a conjectural classification of such groups. So it makes sense to consider the more restrictive notion of measure-equivalence known as {\bf integrable orbit equivalence} (IOE) or $L^1$-orbit-equivalence ($L^1$-OE) which takes into account the metric in addition to measure-theoretic structure. This notion is stricter than integrable measure equivalence (IME) (also called $L^1$-measurable equivalence) that first appeared in \cite{bader-furman-sauer-2013b} where it was shown that any group that is $L^1$-ME to a lattice in $SO(n,1)$ ($n\ge 3$) has a finite normal subgroup whose quotient is a lattice in $SO(n,1)$. Another milestone is T. Austin's work proving $L^1$-ME rigidity for nilpotent groups \cite{austin-bowen-2013} (see also M. Cantrell's recent strengthening of Austin's results \cite{cantrell-2015}). 

The main result of this paper is that any finitely generated accessible group that admits a strict integrable embedding into a free group is virtually free. In particular, any finitely generated accessible group that is IOE to a free group is virtually free. These terms are defined next.

\subsection{Accessible groups}

According to Stallings' Ends Theorem \cite{stallings-1968a}, if a finitely generated group $\Gamma$ has more than one end then it splits as either a nontrivial free product with amalgamation  or as an HNN extension over a finite subgroup. If such splittings cannot occur indefinitely, then the group is called {\em accessible}. C.T.C. Wall conjectured \cite{wall-1971} that all finitely generated groups are accessible. A counterexample was obtained by Dunwoody \cite{dunwoody-1993}. However, Dunwoody showed that all finitely presented groups are accessible \cite{dunwoody-1985}. 

\subsection{Strict integrable embeddings}

Let $\Gamma,G$ be finitely generated groups. Intuitively, a strict integrable embedding of $\Gamma$ into $G$ is a random map from $\Gamma$ into $G$ that is `Lipschitz on average' and has a bounded number of preimages. To be precise, fix a finite symmetric generating set $S_G$ of $G$ and define the {\bf word length} of any $g\in G$ by $|g|_G:=n$ where $n$ is the smallest natural number such that there exist $s_1,\ldots, s_n \in S_G$ with $g=s_1\cdots s_n$. 

Given an action of $\Gamma$ on a set $X$, a {\bf cocycle} into $G$ is a map $\alpha:\Gamma \times X \to G$ such that
$$\alpha(g_1g_2,x) = \alpha(g_1,g_2x)\alpha(g_2,x)\quad \forall g_1,g_2 \in \Gamma, x\in X.$$
In the case of concern, $X$ is endowed with a probability measure $\mu$, the action $\Gamma \cc (X,\mu)$ is measure-preserving and $\alpha$ is measurable. Then we say that $\alpha$ is {\bf integrable} if
$$\int |\alpha(g,x)|_G~d\mu(x) < \infty$$
for every $g\in \Gamma$. While the precise value of $\int |\alpha(g,x)|_G~d\mu(x)$ depends on the generating set $S_G$, its finiteness does not and therefore integrability of $\alpha$ does not depend on $S_G$. 

We say that $\alpha$ is a {\bf strict integrable embedding} if in addition to being integrable there is a constant $C>0$ such that for every $h\in G$,
$$\#\{g\in \Gamma:~\alpha(g,x)=h\}\le C$$
for a.e. $x$. This notion is more restrictive than the notion of integrable embedding defined in the appendix of \cite{austin-bowen-2013}. 

Our main result is:
\begin{thm}\label{thm:main}
Let $\Gamma$ be a finitely generated accessible group. If $\Gamma$ admits a strict integrable embedding into a free group then $\Gamma$ is virtually free.
\end{thm}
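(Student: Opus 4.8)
The plan is to argue that a strict integrable embedding $\alpha:\Gamma\times X\to G$ into a free group $G$ forces the "growth geometry" of $\Gamma$ to be that of a virtually free group, and then invoke accessibility to promote this into the structural conclusion. Concretely, since $G$ is free, its Cayley graph is a tree, and the key feature we want to extract is that $\Gamma$ has (in an averaged, $L^1$ sense) the \emph{coarse separation} behavior of a tree: removing a ball of radius $r$ around the identity cuts the space into pieces, and any path between two far-apart regions must pass through a controlled "cut point." The cocycle identity $\alpha(g_1g_2,x)=\alpha(g_1,g_2x)\alpha(g_2,x)$ together with integrability means that the random map $g\mapsto \alpha(g,x)$ is an "$L^1$-Lipschitz" map $\Gamma\to G$, and the strictness bound $\#\{g:\alpha(g,x)=h\}\le C$ makes it "$L^1$-bilipschitz-like" on average, so the preimage under $\alpha(\cdot,x)$ of an edge-cut of the tree $G$ is, with good probability, a cut of $\Gamma$ of uniformly bounded size. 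Averaging over $x$ and over translates, this should show that $\Gamma$ admits, for suitable scales, a covering by bounded-diameter "cut sets" — i.e., $\Gamma$ has more than one end \emph{at every scale} in a way that does not terminate.

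The main steps I would carry out are: (1) Reduce to the case where the measure-preserving action $\Gamma\curvearrowright(X,\mu)$ is ergodic (pass to an ergodic component, checking that strict integrability survives). (2) Using integrability of $\alpha(s,\cdot)$ for $s$ in a generating set $S_\Gamma$ and the cocycle relation, establish a quantitative "image controls domain" estimate: for a subtree $T\subseteq G$, the set $\alpha(\cdot,x)^{-1}(T)$ has controlled boundary in $\Gamma$, with the control governed by $\sum_{s\in S_\Gamma}\int|\alpha(s,x)|_G\,d\mu$ and the constant $C$. (3) Apply this with $T$ ranging over the two halves of $G$ cut by a single edge — translated around by the $G$-action to sweep over all edges — to produce, via a Borel–Cantelli / averaging argument, a $\Gamma$-invariant family of finite "cuts" of $\Gamma$ witnessing that $\Gamma$ is not one-ended; moreover the same works inside every infinite-index "chunk," so the Dunwoody-style splitting process cannot stabilize at a one-ended vertex group. (4) Invoke accessibility: the splitting process terminates, and by (3) every terminal vertex group is finite or virtually free and zero- or one-ended with the one-ended possibility excluded, hence $\Gamma$ acts on a tree with finite edge stabilizers and finite vertex stabilizers, so by Stallings/Bass–Serre theory $\Gamma$ is virtually free.

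The hardest step is (2)–(3): turning the \emph{probabilistic} statement "$\alpha(\cdot,x)^{-1}$ of a tree-cut is usually a small $\Gamma$-cut" into a \emph{deterministic} geometric statement about $\Gamma$ itself, because $\alpha(\cdot,x)$ need not be injective, need not be Lipschitz pointwise (only on average), and the good-probability cut depends on $x$. The trick I expect to need is to work not with a single $x$ but with the \emph{equivalence relation / groupoid} generated by the OE coupling — i.e., exploit that a strict integrable embedding is essentially an $L^1$ orbit-equivalence restricted to a sub-relation — so that the "cut of $\Gamma$" can be read off from a cut of the orbit-equivalence relation, which \emph{is} a bona fide geometric (measured-laminations-style) object; one then transports Dunwoody's accessibility machinery to this measured setting. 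The bound $C$ on fibers is exactly what keeps the transported cuts finite, and accessibility is exactly what guarantees the process stops, yielding virtual freeness. A secondary technical point is checking that "accessible" is the right hypothesis: it is used only to terminate the splitting, so nothing deeper than Dunwoody's theorem is needed once the one-endedness obstruction is ruled out geometrically.
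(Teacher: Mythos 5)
Your high-level skeleton agrees with the paper's: restrict the cocycle to the vertex groups of the accessible decomposition (the paper notes, via its Lemma \ref{lem:qi}, that vertex groups are quasi-isometrically embedded so the restriction is still an $L^1$-embedding), show that an $L^1$-embedding into a free group is incompatible with one-endedness, and then conclude from accessibility plus ``finite graph of groups with finite vertex and edge groups $\Rightarrow$ virtually free.'' Your step (2), the $L^1$ bound on $|\partial(\alpha'_x)^{-1}(T)|$ for half-trees $T$ in terms of $C\sum_{s\in S_\Gamma}\int|\alpha(s,x)|_G\,d\mu$, is exactly the paper's ``flight'' mass-transport computation, and step (4) is the paper's use of Theorem \ref{thm:v-free}.

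The genuine gap is in step (3), and it is the heart of the matter: to contradict one-endedness you need finite subsets $F_n\subset\Gamma$ with $|\partial F_n|$ uniformly bounded and $|F_n|\to\infty$ (this is the paper's Lemma \ref{lem:2-end}). Pulling back the two halves of $G$ across a single edge only tells you that, for a one-ended $\Gamma$, one of the two preimages is finite --- but it gives no control on which one, and that finite side may well be empty or of bounded size; ``translating the edge around by the $G$-action plus Borel--Cantelli'' does not by itself produce arbitrarily large finite cuts, because you do not know in which direction to push the cutting edge so that the finite side grows. The paper resolves precisely this by first proving that the cocycle extends continuously to the end compactifications (its Theorem \ref{thm:ends}, via the finiteness of the flight-sets $F_{s,k}(x)$), so that the unique end $\xi$ of $\Gamma$ has a well-defined image $\phi(x)=\alpha'_x(\xi)\in\End(G)$; the cuts are then taken along the geodesic ray from $e$ to $\phi(x)$, and the preimages $F(n,x)$ of the corresponding Gromov-product sets are finite (by continuity of the extension at $\xi$), exhaust $\Gamma$, and have $L^1$-bounded boundaries uniformly in $n$. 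Nothing in your sketch supplies this anchoring at the image of the end, and your proposed substitute --- reinterpreting the strict integrable embedding as a sub-relation of an $L^1$-orbit equivalence and transporting Dunwoody's accessibility machinery to a measured-lamination setting --- is a different and much heavier program that you do not carry out (and which is not needed: accessibility enters only through the algebraic decomposition of $\Gamma$, not through any measured version of Dunwoody's theorem). The ergodicity reduction in your step (1) is unnecessary.
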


\begin{defn}
Two groups $\Gamma,\Lambda$ are {\bf integrably orbit equivalent} (IOE) if there exist probability measure-preserving essentially free ergodic actions $\Gamma \cc (X,\mu)$ and $\Lambda \cc (X,\mu)$ such that for a.e. $x\in X$, $\Gamma x = \Lambda x$ and the orbit cycles, defined by
$$\alpha:\Gamma \times X \to \Lambda, \quad \alpha(g,x)x = gx,$$
$$\beta:\Lambda \times X \to \Gamma, \quad \beta(h,x)x = hx$$
are integrable. 
\end{defn}
Clearly an IOE cocycle is a strict integrable embedding. So Theorem \ref{thm:main} implies that any finitely generated accessible group IOE to a free group is virtually free.

We do not know whether accessibility is a necessary condition nor whether `strict integrable embedding' can be weakened to `integrable embedding' or IOE weakened to IME.

{\bf Acknowledgements}. This work was inspired from conversations with Tim Austin, Uri Bader, Alex Furman and Roman Sauer.

\section{Preliminaries}

\begin{defn}
If $X$ is a connected locally connected $\sigma$-compact topological space then let
$$\End(X):=\ilim_K \pi_0(X\setminus K).$$
To be precise, the inverse limit is over all compact subsets $K \subset X$ and $\pi_0(X\setminus K)$ denotes the set of noncompact connected components of $X\setminus K$. We give $\pi_0(X\setminus K)$ the discrete topology. If $K \subset L $ are compact subsets of $X$ then there is a natural map from $\pi_0(X \setminus L)$ to $\pi_0(X\setminus K)$ and $\End(X)$ is the inverse limit of this system (where the collection of compacts of $X$ is ordered by inclusion). In particular, for every compact $K$, there is a natural map $\pi_K:\End(X) \to \pi_0(X \setminus K)$.

The {\bf end compactification of $X$}, denoted $\bX$ is the space $\bX :=X \cup \End(X)$ with the following topology: every open subset of $X$ is $\bX$. Also, for every compact $K \subset X$ and $C \in \pi_0(X\setminus K)$, the set 
$$C \cup \{\xi \in \End(X):~\pi_K(\xi) \in C\} \subset \bX$$
is open. These sets form a basis for the topology of $\bX$.
\end{defn}

Let $\Gamma$ be a group with a finite generating set $S$. The {\bf Cayley graph of $(\Gamma,S)$}, denoted $\Cay(\Gamma,S)$, has vertex set $\Gamma$ and edge set $\{(g,gs):~g\in G,s\in S\}$.  We usually let $\bar{\Gamma}$ denote the end  compactification $\overline{\Cay(\Gamma,S)}$, leaving the generating set implicit.


It is well-known that any finitely generated group quasi-isometric to a free group is itself virtually free. This is usually attributed to Gromov via Stallings' Ends Theorem. Alternatively, it follows from Thomassen-Woess \cite{thomassen-woess-1993} that accessibility is a quasi-isometric-invariant and from Papasoglu-Whyte \cite{papasoglu-whyte-2002} that any accessible group quasi-isometric to a free group must be virtually free. Indeed, this implies more:
\begin{thm}\label{thm:v-free}
If $\Gamma$ is the fundamental group of a finite graph of groups in which all vertex and edge groups are finite then $\Gamma$ is virtually free.
\end{thm}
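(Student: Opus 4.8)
The statement to prove is Theorem~\ref{thm:v-free}: if $\Gamma$ is the fundamental group of a finite graph of groups with all vertex and edge groups finite, then $\Gamma$ is virtually free. The plan is to exhibit an explicit finite-index free subgroup, or equivalently to produce a free action of a finite-index subgroup of $\Gamma$ on a tree with trivial edge and vertex stabilizers. The natural object is the Bass--Serre tree $T$ associated to the given graph-of-groups decomposition: $\Gamma$ acts on $T$ with finite stabilizers (the vertex stabilizers are conjugates of the finite vertex groups, edge stabilizers conjugates of the finite edge groups), and the quotient graph $\Gamma\backslash T$ is the finite underlying graph of the decomposition. So $\Gamma$ acts \emph{cocompactly} on a tree with finite stabilizers.

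First I would reduce to the case where $\Gamma$ is finitely generated: since the underlying graph is finite and all vertex groups are finite, $\Gamma$ is generated by the (finitely many) finite vertex groups together with one stable letter per edge not in a chosen spanning tree, hence is finitely generated (indeed finitely presented). Next, the key step is to find a finite-index torsion-free subgroup. Since $\Gamma$ acts on the tree $T$ with finitely many orbits of cells and all cell stabilizers finite, there are only finitely many conjugacy classes of finite subgroups of $\Gamma$, and the orders of finite subgroups are bounded. One then invokes a Selberg-type lemma in this setting: a group acting cocompactly on a tree with finite stabilizers is residually finite (e.g.\ because it is virtually free — but to avoid circularity, use that such groups are linear over $\Z$, or directly that finitely generated groups acting on trees with finite stabilizers are residually finite, which is classical), and residual finiteness plus boundedly many conjugacy classes of finite subgroups of bounded order yields a finite-index subgroup $\Gamma_0 \le \Gamma$ intersecting every finite subgroup trivially, hence torsion-free. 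The subgroup $\Gamma_0$ still acts on $T$, now \emph{freely} (every stabilizer in $\Gamma$ is finite, so its intersection with the torsion-free $\Gamma_0$ is trivial), so $\Gamma_0$ is the fundamental group of a graph, i.e.\ free. Therefore $\Gamma$ is virtually free.

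The main obstacle is the passage from ``finitely many conjugacy classes of finite subgroups of bounded order'' to an actual torsion-free finite-index subgroup; this is exactly where residual finiteness is needed, and it must be justified for the graph-of-finite-groups case without quoting the conclusion we are trying to prove. The cleanest route is to note that the Bass--Serre tree gives a cocompact action on a locally finite tree, whence $\Gamma$ embeds (virtually) as a discrete subgroup in the automorphism group of that tree, or more elementarily to use that $\Gamma$ acts properly on the tree and build enough finite quotients by hand from the finite vertex/edge data; alternatively one can appeal to Karrass--Pietrowski--Solitar's theorem that a group with a free subgroup of finite index is precisely such a graph-of-finite-groups fundamental group, but that again risks circularity, so I would instead cite the standard fact (Serre, \emph{Trees}) that groups acting on trees with finite stabilizers and finite quotient graph are virtually free. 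Once residual finiteness (or this direct citation) is in hand, the remaining steps are routine.
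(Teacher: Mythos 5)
Your approach is correct, but it is genuinely different from what the paper does: the paper disposes of Theorem~\ref{thm:v-free} with a one-line citation to Papasoglu--Whyte \cite{papasoglu-whyte-2002} (noting the result ``may have been known earlier''), whereas you give the classical Bass--Serre/Karrass--Pietrowski--Solitar argument: let $\Gamma$ act on the Bass--Serre tree $T$ of the decomposition, note all stabilizers are finite and the quotient is finite, find a torsion-free finite-index subgroup $\Gamma_0$, and observe that $\Gamma_0$ acts freely on $T$, hence is free. The one delicate point is exactly the one you flag: producing the torsion-free finite-index subgroup without circularity. Your suggestion to route this through residual finiteness works, but the cleanest non-circular implementation is to construct directly a homomorphism $\phi\colon\Gamma\to F$ onto a finite group that is injective on every vertex group (e.g.\ a permutation representation built from the finite vertex and edge groups, as in Serre's \emph{Trees} or Karrass--Pietrowski--Solitar); then $\ker\phi$ meets every conjugate of every vertex group trivially, so it acts freely on $T$ and is free of finite index. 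With that step made precise (or cited to Karrass--Pietrowski--Solitar rather than to the statement being proved), your argument is complete; it buys a self-contained, elementary proof, while the paper's citation to \cite{papasoglu-whyte-2002} fits its broader quasi-isometry-flavored framing (accessibility plus QI rigidity of trees) at the cost of invoking heavier machinery for a classical fact.
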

\begin{proof}
This follows from \cite{papasoglu-whyte-2002} although it may have been known earlier.
\end{proof}

We also note:
\begin{lem}\label{lem:qi}
If $\Gamma$ is the fundamental group of a finite graph of groups in which all edge groups are finite and $\Gamma_v\le \Gamma$ is a vertex subgroup then $\Gamma_v$ is quasi-isometrically embedded in $\Gamma$.
\end{lem}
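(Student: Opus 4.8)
The plan is to put the geometry of $\Gamma$ in the foreground: realize $\Gamma$ as acting properly and cocompactly on a \emph{tree of spaces} $X$ built over the Bass--Serre tree $T$ of the given graph of groups, and then exploit the single place where finiteness of the edge groups is used --- a finite subgroup has bounded diameter inside any overgroup, so any excursion a geodesic makes out of $\Gamma_v$ can be short-cut at uniformly bounded cost.

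First I would set up the geometry. Since $\Gamma$ is finitely generated, the graph of groups is finite, and its edge groups are finite, every vertex group is finitely generated (standard for a finitely generated group acting cocompactly on a tree with finitely generated edge stabilizers, and automatic for the splittings produced by accessibility). Fix finite generating sets of the vertex groups and a finite generating set $S$ of $\Gamma$ containing all of them. Let $X$ be the associated tree of spaces over $T$: over a tree-vertex with stabilizer $\Lambda$ one places a copy of $\Cay(\Lambda,S_\Lambda)$, over a tree-edge with (finite) stabilizer $F$ a complete graph on $F$, glued in via the edge-group inclusions, together with a $\Gamma$-equivariant projection $\pi\colon X\to T$. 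Then $X$ is connected and locally finite and the $\Gamma$-action on it is proper and cocompact, so a Milnor--Schwarz argument gives a $\Gamma$-equivariant quasi-isometry $\Cay(\Gamma,S)\to X$ that carries $\Gamma_v$ onto the vertex space $X_v:=\pi^{-1}(\tilde v)$, for a fixed tree-vertex $\tilde v$ stabilized by $\Gamma_v$, and that matches the word metric of $\Gamma_v$ with the intrinsic path metric of $X_v$. Hence it suffices to prove that the inclusion $X_v\hookrightarrow X$ is a quasi-isometric embedding; since it is $1$-Lipschitz, only an inequality $d_{X_v}(a,b)\le A\,d_X(a,b)+B$ is needed.

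Then I would run the surgery, which is the heart of the matter. Given $a,b\in X_v$, take a geodesic $\gamma$ of $X$ from $a$ to $b$ and decompose it as $\gamma=\gamma_0\delta_1\gamma_1\cdots\delta_k\gamma_k$, where the $\gamma_i$ are the (possibly trivial) maximal subpaths lying in $X_v$ and each $\delta_j$ is a maximal subpath all of whose interior vertices lie outside $X_v$. The interior of $\delta_j$ lies in one connected component $U_j$ of $X\setminus X_v$, and $\pi(U_j)$ is contained in a single half-tree of $T$ hanging off $\tilde v$ across one tree-edge $e_j$; consequently $\gamma$ can leave $X_v$ into $U_j$ and come back only through the edge space over $e_j$, so both endpoints of $\delta_j$ lie in that edge space. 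But that edge space is, inside $X_v$, a coset of the \emph{finite} edge group $F_{e_j}$, hence of diameter at most a constant $D$ depending only on the (finitely many) finite edge groups. So the endpoints of $\delta_j$ are at distance $\le D$ in $X_v$, and I may replace $\delta_j$ by a path of length $\le D$ inside $X_v$. The result is a path from $a$ to $b$ lying in $X_v$ of length at most $\sum_i\ell(\gamma_i)+kD\le(1+D)\,\ell(\gamma)=(1+D)\,d_X(a,b)$, using $k\le\ell(\gamma)$ since the $\delta_j$ are pairwise disjoint and each has length $\ge 1$. Therefore $d_{X_v}(a,b)\le(1+D)\,d_X(a,b)$, as required.

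The step to be most careful about is this short-cutting: one really needs the endpoints of an excursion to be close \emph{in the intrinsic metric of $X_v$}, not just in $X$ (closeness in $X$ is automatic and of no use), and this is precisely where finiteness of the edge groups is indispensable --- for an infinite edge group the relevant cosets can have unbounded diameter in the vertex group and the statement is genuinely false, as with the exponentially distorted vertex group $\Z$ in the Baumslag--Solitar group $BS(1,2)$. The remaining ingredients --- finite generation of the vertex groups, the fact that the tree of spaces is $\Gamma$-equivariantly quasi-isometric to $\Gamma$ with $X_v$ matching $\Gamma_v$, the observation that $\pi(U_j)$ lies in one half-tree so that entry and exit occur across a single edge space, and the bound $k\le\ell(\gamma)$ --- are routine. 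A purely combinatorial alternative, via reduced words in the graph of groups, is also available, but the geometric argument above makes the role of finiteness the most transparent.
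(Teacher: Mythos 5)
Your argument is correct. Note, though, that there is nothing in the paper to compare it with: the lemma is stated there with no proof at all (it is offered as a known fact, in the spirit of the quasi-isometry facts cited around Theorem~\ref{thm:v-free}), so your write-up supplies a justification the paper omits. What you give is the standard Bass--Serre/tree-of-spaces argument, and the two essential points are handled properly: (i) the connectivity claim that an excursion of an $X$-geodesic out of the vertex space $X_v$ must enter and exit through the edge space over a single tree-edge incident to $\tilde v$, because distinct edges at $\tilde v$ correspond to distinct components of $T\setminus\{\tilde v\}$ and every edge of $X$ leaving $X_v$ lies over an edge at $\tilde v$; and (ii) the short-cut bound, where finiteness of the (finitely many) edge groups gives a uniform bound $D$ on the diameter, in the intrinsic metric of $X_v$, of the attaching cosets $g\,\iota_e(F_e)$, which is exactly where the hypothesis is used (your $BS(1,2)$ remark correctly locates the failure otherwise). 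You also rightly flag the implicit finite-generation issue: the lemma only makes sense with $\Gamma$ finitely generated, and then finiteness of the edge groups forces the vertex groups to be finitely generated, so the word metric on $\Gamma_v$ exists. Two minor points you could make explicit: the Milnor--Schwarz identification should be taken via the orbit map based at the identity vertex of $X_v$, so that the intrinsic metric of $X_v$ restricted to $\Gamma_v$-orbit points is literally the word metric of $\Gamma_v$; and there is no degenerate excursion consisting of a single edge with both endpoints in $X_v$, since any edge of $X$ not in a vertex space joins distinct vertex spaces of the tree. Neither affects the validity of the proof.
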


Finally, we introduce a notion of $L^1$-embedding:
\begin{defn}
Let $\Gamma \cc (X,\mu)$ a probability measure-preserving action and $\alpha:\Gamma \times X \to G$ a measurable cocycle. We say $\alpha$ is an {\em $L^1$-embedding} if
\begin{itemize}
\item $\alpha$ is $L^1$: for every $g\in \Gamma$, $\int |\alpha(g,x)|_G~d\mu(x)<\infty$ where $|\cdot |_G$ denotes length with respect to a fixed word metric on $G$;
\item there is a constant $C>0$ such that for any $h\in G$,
$$\# \{g\in \Gamma:~ \alpha(g,x) = h\} \le C$$
for a.e. $x$.
\end{itemize}
\end{defn}

\begin{remark}
It is straightforward to check that a composition of $L^1$-embeddings is an $L^1$-embedding and that any cocycle arising from an $L^1$-OE is an $L^1$-embedding.
\end{remark}

In order to prove Theorem \ref{thm:main}, it now suffices to show the following. Let $\Gamma \cc (X,\mu)$ be a probability measure-preserving action and $\alpha:\Gamma \times X \to G$ an $L^1$-embedding. Then $\Gamma$ is not 1-ended. We will show this in the next section.


\section{The Space of Ends}

\begin{thm}\label{thm:ends}
Let $\Gamma, G$ be finitely generated groups, $\Gamma \cc (X,\mu)$ a probability measure-preserving action, $\alpha:\Gamma \times X \to G$ an $L^1$-embedding. Define $\alpha':\Gamma \times X \to G$ by
$$\alpha'(g,x)=\alpha(g^{-1},x)^{-1}.$$
Let $\bar{\Gamma}, \bG$ denote the end-compactifications of $\Gamma, G$ respectively with respect to fixed finite generating subsets. Then $\alpha'$ extends to a map, also denoted by $\alpha'$ from $\bar{\Gamma}  \times X \to \bar{G}$ such that
\begin{itemize}
\item $\alpha'_{gx}(g\xi) = \alpha(g,x)\alpha'_x(\xi)$ (for $g\in G, \xi \in \bar{\Gamma}$ and a.e. $x\in X$);
\item $\alpha'_x:\bar{\Gamma} \to \bar{G}$ is continuous for a.e. $x$.
\end{itemize}
Here, $\alpha'_x(g):= \alpha'(g,x)$.
\end{thm}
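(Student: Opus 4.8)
The plan is to build the extension $\alpha'_x$ end-by-end, using the $L^1$-embedding hypothesis to control how the map distorts large metric balls along a.e. orbit. The starting observation is the cocycle identity: since $\alpha'(g,x) = \alpha(g^{-1},x)^{-1}$ is itself a cocycle (a routine check from the cocycle identity for $\alpha$), for fixed $x$ the map $g \mapsto \alpha'_x(g)$ satisfies $\alpha'_x(g_1) \alpha'_x(g_2)^{-1} = \alpha'_{g_2 x}(g_1 g_2^{-1})$, so the "one-step displacement" $|\alpha'_x(gs) - \alpha'_x(g)|$ type quantities are governed by $|\alpha'_{gx}(s)|_G = |\alpha(s^{-1}, g x)^{-1}|_G = |\alpha(s^{-1}, g x)|_G$ for $s$ in the generating set $S_\Gamma$. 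Summing over $s \in S_\Gamma$ and integrating, $\sum_s \int |\alpha(s^{-1},x)|_G\, d\mu(x) < \infty$. The first real step is therefore to invoke the pointwise ergodic theorem (or, if the action is not ergodic, the ergodic decomposition / just a Borel–Cantelli argument along the orbit) to deduce that for a.e. $x$, the function $g \mapsto \alpha'_x(g)$ is "Lipschitz on average along $\Gamma$": the average of $\sum_{s} |\alpha(s^{-1}, gx)|_G$ over $g$ in a ball $B_\Gamma(n)$ stays bounded, and in fact the sum over $g$ in any fixed ball is finite. From this one extracts that $\alpha'_x$ is a coarse map: it sends a path in $\Cay(\Gamma, S_\Gamma)$ to a sequence of points in $G$ whose successive jumps are finite (so it lies in a single connected component away from suitable compacts), and the bounded-fiber condition $\#\{g : \alpha'_x(g) = h\} \le C$ prevents $\alpha'_x$ from collapsing infinitely much of $\Gamma$ into a bounded region of $G$ — i.e., $\alpha'_x$ is \emph{coarsely proper} in the averaged sense needed.

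The second step is to define $\alpha'_x(\xi)$ for an end $\xi \in \End(\Gamma)$. Represent $\xi$ by a sequence $g_n \to \xi$ in $\bar\Gamma$; I would choose the $g_n$ along a ray (a geodesic or just a proper path) escaping to $\xi$, and along which the displacement sum $\sum_n |\alpha(s^{-1}, g_n x)|_G$ is summable — such a ray exists for a.e. $x$ by the finiteness established in step one applied carefully (here one uses that most rays, in the appropriate counting sense, are "good", and that $\alpha'_x$ restricted to a good ray has images $\alpha'_x(g_n)$ forming a Cauchy-type sequence in $\bar G$, i.e. eventually trapped in ever-smaller end-neighborhoods). Then set $\alpha'_x(\xi) := \lim_n \alpha'_x(g_n) \in \bar G$. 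One must check: (a) this limit exists in $\bar G$ — it does because the successive jumps are finite, so the images stay in one component of $G \setminus K$ for each compact $K$, and by properness they leave every compact, hence converge to an end (or the limit could a priori be a point of $G$, which is ruled out by properness: infinitely many distinct $g_n$ with images in a bounded set would violate the $\le C$ bound); (b) independence of the chosen good ray — two good rays to the same end $\xi$ are connected by a path staying far out in $\Gamma$, whose $\alpha'_x$-image stays far out in $G$ by the averaged-properness, forcing the two limits into the same end of $G$.

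The third step is the equivariance identity $\alpha'_{gx}(g\xi) = \alpha(g,x)\alpha'_x(\xi)$: this is immediate on $\Gamma$ from the cocycle identity (with the $\alpha \leftrightarrow \alpha'$ bookkeeping), and passes to ends by continuity once continuity is known, since left-translation by $\alpha(g,x)$ is a homeomorphism of $\bar G$. The final step is continuity of $\alpha'_x : \bar\Gamma \to \bar G$. At points of $\Gamma$ this is vacuous since $\Gamma$ is discrete and open in $\bar\Gamma$. At an end $\xi$, I must show that for every compact $L \subset G$ and component $C$ of $G\setminus L$ with $\alpha'_x(\xi) \in \bar C$, there is a compact $K \subset \Gamma$ and a component $D$ of $\Gamma \setminus K$ with $\xi \in \bar D$ and $\alpha'_x(\bar D) \subset \bar C$. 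This is exactly the averaged-properness statement made uniform: I would argue by contradiction — if not, there are points $g_n \in \Gamma$ escaping to $\xi$ with $\alpha'_x(g_n) \notin \bar C$, hence a path from a good ray to each $g_n$ along which the displacement sum fails to be small, and summing these over a positive-density set of such $n$ contradicts the finiteness $\sum_{g \in \Gamma}(\text{something}) < \infty$ from step one.

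\textbf{Main obstacle.} The crux is step one made quantitative and the contrapositive in the continuity step: turning "$\sum_s \int |\alpha(s^{-1},x)|_G\,d\mu < \infty$" into a genuine coarse-properness statement for $\alpha'_x$ that is uniform enough over rays escaping a given end. The bounded-fiber condition is doing essential work here (it is what ultimately forces images of escaping sequences to escape, not just drift), and the care needed is in handling the exceptional null set uniformly over all ends simultaneously — ends form a Cantor-type set, not a countable set, so one cannot simply take a countable union of null sets over all $\xi$. I expect the resolution is to phrase everything in terms of the (countably many) basic clopen sets of $\bar\Gamma$ coming from finite compacts $K \subset \Gamma$, prove the required properness inequality for each such $K$ off a single null set, and then intersect over the countable family of $K$'s.
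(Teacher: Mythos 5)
Your overall architecture (define the image of an end as a limit along escaping sequences, handle the null-set issue via the countable family of finite sets $K\subset G$, get equivariance on ends by continuity, and use the bounded-fiber constant $C$ to see that images of escaping sequences cannot accumulate at a point of $G$) is sound and matches the shape of the paper's argument. But there is a genuine gap at exactly the point you yourself identify as the crux. The statement your plan needs is: for a.e.\ $x$ and every finite $K\subset G$, only \emph{finitely many} edges $(g,gs)$ of $\Cay(\Gamma,S_\Gamma)$ have the property that a chosen geodesic in $G$ from $\alpha'_x(g)$ to $\alpha'_x(gs)$ meets $K$. Once you have this, everything else follows softly: outside the finite set of bad vertices, edge-paths in $\Gamma$ map to concatenations of geodesics avoiding $K$, so $\alpha'_x$ respects components of the complements, the limit along any escaping sequence exists, is ray-independent, and is continuous at ends. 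Your proposal never proves this finiteness. The mechanisms you invoke do not deliver it: integrability of $\alpha$ does \emph{not} give rays along which the displacement sum $\sum_n|\alpha(s^{-1},g_nx)|_G$ is summable (indeed the image of a ray must escape to infinity, so its total displacement is infinite, and the pointwise ergodic theorem controls averages along orbits of a single element, not along arbitrary rays in $\Gamma$); and "Lipschitz on average over balls" is a triviality (a finite sum of a.e.-finite terms) that carries no properness information. Likewise, in your continuity step the contradiction is supposed to come from a finiteness "$\sum_{g\in\Gamma}(\cdot)<\infty$ from step one" that was never established.

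The paper closes this gap with a measure-theoretic (mass-transport) computation rather than an ergodic-theoretic one: for fixed $s\in S_\Gamma$ and $k\in G$, set $F_{s,k}(x)=\{g:\ k\in\gamma[\alpha'_x(g),\alpha'_x(gs)]\}$ and compute $\int\#F_{s,k}\,d\mu$ by the change of variables $(g,x)\mapsto(g^{-1},gx)$ under $c_\Gamma\times\mu$; the cocycle identity turns the condition into $\alpha'_{gx}(g)^{-1}k\in\gamma[e,\alpha(s^{-1},x)^{-1}]$, and the fiber bound gives at most $C$ solutions $g$ per point of that geodesic, so $\int\#F_{s,k}\,d\mu\le C\int|\alpha(s^{-1},x)|_G\,d\mu<\infty$, whence $F_{s,k}(x)$ is finite a.e. This single estimate is where the $L^1$ hypothesis, measure preservation, and the bounded-fiber condition all interact, and it is the ingredient your plan is missing; your "summable displacement along good rays" should be replaced by it (note also a small bookkeeping slip: the one-step displacement at $g$ is $\alpha(s^{-1},g^{-1}x)^{-1}$, not $\alpha(s^{-1},gx)^{-1}$, which is harmless but hints at why the transported variable $gx$ appears in the paper's computation). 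Your observation that properness of $\alpha'_x$ (preimages of finite sets have size $\le C|K|$) follows directly from the fiber bound is correct, but properness alone does not control where far-out \emph{edges} map, which is what continuity at an end actually requires.
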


\begin{remark}
The Theorem above implies that every finitely generated group is 1-taut relative to its space of ends, in the terminology of \cite{bader-furman-sauer-2013b}. We will not need this fact.
\end{remark}

Theorem \ref{thm:ends} follows immediately from the next two lemmas.

\begin{lem} \label{l:ends}
Let $X,Y$ be connected locally connected $\sigma$-compact topological spaces.
Let $\alpha: X\to Y$ be a continuous map.
Assume that for every compact $K\subset Y$ there exists a compact $F\subset X$ such that
$\alpha(X\setminus F)\subset Y\setminus K$ and $\alpha$ descends to a well defined map $\pi_0(X\setminus F)\to \pi_0(Y\setminus K)$.
Then $\alpha$ extends continuously to $\bar{\alpha}: \bX \to \bY$.
\end{lem}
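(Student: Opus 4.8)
The plan is to define the extension $\bar\alpha$ on ends directly via the induced maps on $\pi_0$, and then verify continuity from the basis description of the end-compactification topology. First I would set $\bar\alpha|_X = \alpha$. For an end $\xi \in \End(X)$, recall $\xi$ is a compatible family $(\pi_K(\xi))_K$ over compacts $K \subset X$, where $\pi_K(\xi)$ is a noncompact component of $X \setminus K$. Given a compact $L \subset Y$, the hypothesis supplies a compact $F \subset X$ with $\alpha(X \setminus F) \subset Y \setminus L$ together with a well-defined map $\alpha_*: \pi_0(X \setminus F) \to \pi_0(Y \setminus L)$; define the $L$-coordinate of $\bar\alpha(\xi)$ to be $\alpha_*(\pi_F(\xi))$. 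The first thing to check is that this is well-defined: if $F'$ is another compact working for $L$, then enlarging to $F \cup F'$ (which also works for $L$, since $\alpha(X \setminus (F\cup F')) \subset \alpha(X\setminus F) \subset Y \setminus L$, and the component map factors appropriately) shows the two choices agree. Next, compatibility of the resulting family $(\bar\alpha(\xi)_L)_L$ under inclusions $L \subset L'$: choose a single compact $F$ large enough to work for $L'$ (hence also for $L$, by the same containment argument), and chase the commuting square relating $\pi_0(X\setminus F) \to \pi_0(Y \setminus L')$, $\pi_0(Y\setminus L') \to \pi_0(Y\setminus L)$, and $\pi_0(X\setminus F)\to \pi_0(Y\setminus L)$. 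This commutativity is exactly the statement that $\alpha$ respects the inverse systems, which is implicit in the hypothesis once one picks $F$ uniformly; so $\bar\alpha(\xi) \in \End(Y)$ is a genuine point of $\bY$.

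It remains to prove $\bar\alpha: \bX \to \bY$ is continuous. Since continuity is local and $X$ is open in $\bX$ with $\bar\alpha|_X = \alpha$ continuous, I only need continuity at each end $\xi \in \End(X)$. A basic open neighborhood of $\bar\alpha(\xi)$ in $\bY$ has the form $V = D \cup \{\eta \in \End(Y): \pi_L(\eta) \in D\}$ for some compact $L \subset Y$ and $D = \pi_L(\bar\alpha(\xi)) \in \pi_0(Y \setminus L)$. Pick the compact $F \subset X$ furnished by the hypothesis for this $L$, let $C = \pi_F(\xi) \in \pi_0(X \setminus F)$, and set $U = C \cup \{\zeta \in \End(X): \pi_F(\zeta) \in C\}$, a basic open neighborhood of $\xi$. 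I claim $\bar\alpha(U) \subset V$. For a point of $C \subset X \setminus F$: its $\alpha$-image lies in $Y \setminus L$ (by $\alpha(X\setminus F)\subset Y\setminus L$) and in the component $\alpha_*(C) = D$, so it is in $D \subset V$. For an end $\zeta \in \End(X)$ with $\pi_F(\zeta) = C$: by construction $\pi_L(\bar\alpha(\zeta)) = \alpha_*(\pi_F(\zeta)) = \alpha_*(C) = D$, so $\bar\alpha(\zeta) \in V$. Hence $\bar\alpha$ is continuous at $\xi$, completing the proof.

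The only genuinely delicate point is bookkeeping the choices of $F$: the hypothesis gives, for each $L$, \emph{some} compact $F$, not a coherent system, so the well-definedness and compatibility arguments both hinge on the observation that if $F$ works for $L$ then any larger compact $F' \supseteq F$ also works for $L$ (because $\alpha(X \setminus F') \subseteq \alpha(X \setminus F) \subseteq Y \setminus L$ and the induced map on components factors through $\pi_0(X\setminus F')\to\pi_0(X\setminus F)$). This lets one always pass to a common refinement and reduces every coherence check to the behavior of a single $\alpha_*$. I expect this to be the main — though still routine — obstacle; everything else is unwinding definitions of the inverse-limit topology.
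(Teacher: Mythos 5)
Your proposal is correct and follows essentially the same route as the paper: the paper's (very terse) proof defines the extension on ends via the induced maps $\End(X)\to\pi_0(X\setminus F)\to\pi_0(Y\setminus L)$ and invokes the universal property of the inverse limit, which is exactly what you do, with the well-definedness, compatibility, and continuity checks spelled out. No gaps; your version simply makes explicit the bookkeeping of the choices of $F$ and the verification of continuity from the basis of the end-compactification topology that the paper leaves to the reader.
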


\begin{proof}
For every $K\subset Y$ we have a map
\[ \End(X)\to \pi_0(X\setminus D)\to \pi_0(Y\setminus K), \]
so the lemma follows by the definition of the inverse limit $\ilim_D \pi_0(Y\setminus K)$.
\end{proof}

\begin{lem} \label{lem:prox}
For a.e. $x\in X$ and every finite set $K\subset G$
there exists a finite set $F\subset \Gamma$ (depending on $x$ and $K$)
such that $\alpha'_x(\Gamma \setminus F) \subset G\setminus K$ and $\alpha'_x$ descends to a map $\pi_0(\Gamma\setminus F)\to \pi_0(G\setminus K)$. 
\end{lem}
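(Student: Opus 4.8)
The plan is to take for $F$ the set of vertices of $\Cay(\Gamma,S_\Gamma)$ at which the one--step $G$--displacement of $\alpha'_x$ is at least as large as the distance to $K$, and then to verify the two required properties directly, leaving the finiteness of $F$ as the real work. First I record two elementary consequences of the hypothesis. Since $S_G$ is symmetric we have $|\alpha'(g,x)|_G=|\alpha(g^{-1},x)^{-1}|_G=|\alpha(g^{-1},x)|_G$, so both the $L^1$ bound and the $C$--to--$1$ bound in the definition of $L^1$--embedding pass from $\alpha$ to $\alpha'$. Hence: (i) for a.e.\ $x$ the map $\alpha'_x:\Gamma\to G$ is proper, i.e.\ $(\alpha'_x)^{-1}(\text{finite})$ is finite, because $\#(\alpha'_x)^{-1}(h)\le C$; and (ii) setting $\Phi(y):=\max_{s\in S_\Gamma}|\alpha'(s,y)|_G$, the function $\Phi$ lies in $L^1(X,\mu)$ (finite max of $L^1$ functions). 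From the cocycle identity one gets, for a.e.\ $x$ and all $v\in\Gamma$, $s\in S_\Gamma$, the identities $\alpha'_x(v)=\alpha(v,v^{-1}x)$ and $\alpha'_x(vs)=\alpha'_x(v)\,\alpha'(s,v^{-1}x)$, so in particular
\[
d_G\big(\alpha'_x(v),\alpha'_x(vs)\big)=|\alpha'(s,v^{-1}x)|_G\le\Phi(v^{-1}x).
\]

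Now fix a.e.\ $x$ and a finite $K\subset G$; it suffices to treat $K=\{h\in G:|h|_G\le D\}$, $D\in\N$, since these are cofinal among finite subsets of $G$ and one may intersect the corresponding full--measure sets of $x$'s. Define
\[
F:=\{v\in\Gamma:\ d_G(\alpha'_x(v),K)\le\Phi(v^{-1}x)\}.
\]
If $v\notin F$ then $d_G(\alpha'_x(v),K)>\Phi(v^{-1}x)\ge0$, so $\alpha'_x(v)\notin K$; thus $\alpha'_x(\Gamma\setminus F)\subset G\setminus K$, which is the first assertion. If in addition $vs\notin F$ for some $s\in S_\Gamma$, so that the edge $\{v,vs\}$ survives in $\Gamma\setminus F$, then a $G$--geodesic from $\alpha'_x(v)$ to $\alpha'_x(vs)$ has length $\le\Phi(v^{-1}x)<d_G(\alpha'_x(v),K)$ and hence stays at positive distance from $K$; therefore $\alpha'_x(v)$ and $\alpha'_x(vs)$ lie in one component of $G\setminus K$. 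Concatenating along edge--paths, $\alpha'_x$ maps each component of $\Gamma\setminus F$ into a single component of $G\setminus K$, and since $\alpha'_x$ is at most $C$--to--$1$ it maps infinite (hence noncompact) components to noncompact ones. So $\alpha'_x$ descends to $\pi_0(\Gamma\setminus F)\to\pi_0(G\setminus K)$ \emph{provided $F$ is finite} — and then Lemma \ref{l:ends} (applied via this data) is exactly the kind of statement one wants.

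The remaining point, which I expect to be \textbf{the main obstacle}, is that $F$ is finite for a.e.\ $x$; for $K=\{|h|_G\le D\}$ this says $\#\{v:|\alpha'_x(v)|_G\le D+\Phi(v^{-1}x)\}<\infty$ a.e. The natural attack is a Borel--Cantelli argument after two reductions: organize the count by the image point $h=\alpha'_x(v)\in G$ (legitimate since $\alpha'_x$ is $C$--to--$1$, so $\#F\le C\cdot\#\alpha'_x(F)$), and for each $h$ estimate $\mu(h\in\alpha'_x(F))$ by summing over the possible preimages $v$ and substituting $y=v^{-1}x$; using $\alpha'_x(v)=\alpha(v,v^{-1}x)$, the $C$--to--$1$ bound on $\alpha(\cdot,y)$, and measure--preservation, this yields $\mu(h\in\alpha'_x(F))\le C\,\mu\big(\{\Phi\ge d_G(h,K)\}\big)$, so one would be done if $\sum_{h\in G}\mu(\Phi\ge d_G(h,K))<\infty$. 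The subtlety is that, when $G$ has exponential growth, this sum converges only if $\Phi$ has a finite exponential moment, which integrability of $\alpha'$ does not provide; so a first--moment/union bound over $\Gamma$ (or over $G$) is genuinely insufficient, and one must instead show $\mu(\#F=\infty)=0$ by extracting more from the dynamics — e.g.\ using Birkhoff/Poincar\'e recurrence along the cyclic subgroups $\langle s\rangle$ ($s\in S_\Gamma$) to control how the orbit of $x$ meets the region $\{\Phi\ge t\}$, hence how fast $\Phi(v^{-1}x)$ can grow along $\Gamma$, and combining this with properness of $\alpha'_x$ so that $|\alpha'_x(v)|_G$ outgrows $\Phi(v^{-1}x)$ for all but finitely many $v$. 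Making that comparison quantitative is the step I would expect to require the most care.
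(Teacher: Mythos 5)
Your reduction of the lemma to the finiteness of your set $F$ is fine as far as it goes, but the finiteness of $F$ \emph{is} the entire content of the lemma, and that is exactly the step you leave open; so the proposal has a genuine gap, and I do not think your particular choice of $F$ can be salvaged. With only $L^1$ control on the cocycle, the set $F=\{v\in\Gamma:\ d_G(\alpha'_x(v),K)\le\Phi(v^{-1}x)\}$ is too large: after the natural change of variables $x\mapsto v^{-1}x$ its expected cardinality is only bounded by $C\int \#\{h\in G:\ d_G(h,K)\le\Phi(x)\}\,d\mu(x)$, i.e.\ by an exponential moment of $\Phi$, which is precisely the obstruction you yourself identify. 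The remedies you sketch (Birkhoff/Poincar\'e recurrence along cyclic subgroups) are not carried out, and it is far from clear they can be: under an $L^1$ hypothesis alone nothing prevents $\Phi(v^{-1}x)$ from exceeding $d_G(\alpha'_x(v),K)$ for infinitely many $v$, since the distance to $K$ is not bounded below by any function of $|v|_\Gamma$ and the tail $\mu(\Phi\ge t)$ need not decay faster than $1/t$.

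The paper's proof avoids this by taking a much smaller $F$ and estimating its size by a mass-transport (change of variables) argument instead of a union bound. Fix a left-invariant choice of geodesics $\gamma[h_1,h_2]$ in $G$, and for $s\in S_\Gamma$, $k\in G$ let $F_{s,k}(x)$ be the set of $g\in\Gamma$ such that the geodesic from $\alpha'_x(g)$ to $\alpha'_x(gs)$ passes through the \emph{specific point} $k$; then set $F_K(x)=\bigcup\{F_{s,k}(x):\ s\in S_\Gamma,\ k\in K\}$. This $F$ has your two properties for the same reasons as yours (if $g\notin F_K(x)$ then $\alpha'_x(g)\notin K$, being an endpoint of each flight; and an edge $(g,gs)$ surviving in $\Gamma\setminus F_K(x)$ has its flight disjoint from $K$). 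Finiteness is proved by applying the $(c_\Gamma\times\mu)$-preserving map $(g,x)\mapsto(g^{-1},gx)$: the condition ``$k$ lies on the flight at $g$ for the point $gx$'' becomes ``$\alpha(g,x)^{-1}k\in\gamma[e,\alpha(s^{-1},x)^{-1}]$'', so for fixed $x$ the count runs over the at most $|\alpha(s^{-1},x)|_G+1$ points of a \emph{single} geodesic, each accounting for at most $C$ elements $g$ by the $C$-to-one hypothesis; hence $\int\#F_{s,k}\,d\mu\lesssim C\int|\alpha(s^{-1},x)|_G\,d\mu<\infty$ and $F_{s,k}(x)$ is finite a.e. This localization to flights through one point $k$, combined with transporting the count to the base point, is what makes the estimate linear in the cocycle rather than exponential, and it is the idea missing from your proposal. (Your closing observation that the $C$-to-one property forces infinite components to land in noncompact components of $G\setminus K$ is correct and indeed needed; the paper leaves it implicit.)
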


\begin{proof}
Let $S_\Gamma, S_G$ be finite generating sets for $\Gamma,G$ respectively. Let $|\cdot|_\Gamma, |\cdot|_G$ denote word length on $\Gamma,G$ respectively.

For each $h_1,h_2 \in G$, choose a geodesic segment $\gamma[h_1,h_2]$ from $h_1$ to $h_2$. More precisely, for every integer $0\le n \le |h_1^{-1}h_2|_G$, there is an element $\gamma[h_1,h_2](n) \in G$ so that 
$$\gamma[h_1,h_2](n)^{-1}\gamma[h_1,h_2](n+1) \in S_G$$
if $n<|h_1^{-2}h_2|_G$ and $\gamma[h_1,h_2](0)=h_1, \gamma[h_1,h_2]( |h^{-1}h_2|_G) = h_2$. Let us also require that this choice is left-invariant so that $h \gamma[h_1,h_2]=\gamma[hh_1,hh_2]$ for any $h,h_1,h_2 \in G$.

For each $x\in X$, $g\in \Gamma$, $s\in S_\Gamma$, we imagine an airplane flying from $\alpha'_x(g)$ to $\alpha'_x(gs)$. The path of the flight is the geodesic $\gamma[\alpha'_x(g),\alpha'_x(gs)]$. We call this an {\bf $s$-flight}. For $k\in G$, we let $F_{s,k}(x)$ denote the set of elements $g\in \Gamma$ such that the $s$-flight from $\alpha'_x(g)$ to $\alpha'_x(gs)$ contains $k$. That is: 
$$F_{s,k}(x) := \{g\in \Gamma:~ k \in \gamma[\alpha'_x(g), \alpha'_x(gs)]\}.$$

\noindent {\bf Claim 1}. $F_{s,k}(x)$ is finite for a.e. $x$. In fact,
$$\int \#F_{s,k}(x)~d\mu(x) \le C\int |\alpha(s^{-1},x)^{-1}|_G~d\mu(x) < \infty$$
where $C>0$ is the constant in the definition of $L^1$-embedding.

\begin{proof}[Proof of Claim 1]
It suffices to show that $\int \# F_{s,k}(x)~d\mu(x) < \infty$. In order to prove this, let
$$L_{s,k} = \{(g,x) \in \Gamma \times X:~ g^{-1} \in F_{s,k}(x)\}.$$
Let $c_\Gamma$ denote the counting measure on $\Gamma$. Then $\int \#F_{s,k}~d\mu = c_\Gamma \times \mu(L_{s,k})$. Because the action $\Gamma \cc (X,\mu)$ is invariant, 
$$c_\Gamma \times \mu(L_{s,k}) = c_\Gamma \times \mu(R_{s,k})$$
where $R_{s,k} = \{ (g^{-1},gx):~ (g,x) \in L_{s,k}\}$. By definition
\begin{eqnarray*}
c_\Gamma \times \mu(R_{s,k}) &=& \int \# \{g\in \Gamma:~ (g,x)\in R_{s,k} \}~d\mu(x).
\end{eqnarray*}
However, $(g,x) \in R_{s,k}$ if and only if $(g^{-1},gx)\in L_{s,k}$ if and only if $g\in F_{s,k}(gx)$ if and only if $k \in \gamma[\alpha'_{gx}(g), \alpha'_{gx}(gs)]$ if and only if
$$\alpha'_{gx}(g)^{-1}k \in \gamma[e, \alpha'_{gx}(g)^{-1}\alpha'_{gx}(gs)].$$
Let us now compute
\begin{eqnarray*}
\alpha'_{gx}(g)^{-1}\alpha'_{gx}(gs) = \alpha(g^{-1}, gx)\alpha(s^{-1}g^{-1}, gx)^{-1} = \alpha(s^{-1},x)^{-1}
\end{eqnarray*}
by the cocycle equation. So
\begin{eqnarray*}
\int \# F_{s,k}~d\mu &=& c_\Gamma \times \mu(R_{s,k}) = \int \#\{g\in \Gamma:~\alpha'_{gx}(g)^{-1}k \in \gamma[e, \alpha(s^{-1},x)^{-1}]\}~d\mu(x)\\
&\le& C\int |\alpha(s^{-1},x)^{-1}|_G~d\mu(x) < \infty.
\end{eqnarray*}
\end{proof}

Now let $K \subset G$ be finite and define
$$F_K(x): = \bigcup \{F_{s,k}:~s\in S_\Gamma, k\in K\}.$$
To finish the proof of the lemma, it suffices to show that if $g_1,g_2 \in \Gamma$ are in the same connected component of $\Gamma \setminus F_{K}(x)$ then $\alpha'_x(g_1), \alpha'_x(g_2)$ are in the same connected component of $G \setminus K$. Because $S_\Gamma$ is a generating set, we may assume that $g_2=g_1s$ for some $s\in S_\Gamma$. Because $g_1 \notin F_K(x)$, it follows that
$$K \cap \gamma[\alpha'_x(g_1), \alpha'_x(g_1s)] = \emptyset.$$
So $\alpha'_x(g_1), \alpha'_x(g_1s)$ are in the same connected component of $G \setminus K$ as required.
\end{proof}

\begin{defn}
Suppose $H$ is a finitely generated group and $S_H\subset H$ is a finite symmetric generating set.  Let $\Cay(H,S_H)$ be the associated Cayley graph. Given a subset $F \subset H$, let $\partial F$ be the set of all edges $e$ of $\Cay(H,S_H)$ with one endpoint in $F$ and one endpoint in $H \setminus F$. 
\end{defn}

\begin{lem}\label{lem:2-end}
Suppose $H$ is a finitely generated group and $S_H\subset H$ is a finite symmetric generating set.  Suppose there exists a constant $C>0$ and finite subsets $F_n \subset H$ such that
\begin{itemize}
\item $|\partial F_n| \le C$ for all $n \in \N$,
\item $\lim_{n\to\infty} |F_n| =\infty$.
\end{itemize}
Then $H$ has at least 2 ends.
\end{lem}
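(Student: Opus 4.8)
The plan is to show that the complement $H \setminus F_n$ must have at least two infinite connected components for infinitely many $n$, which immediately gives at least two ends.

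First I would recall that since $|\partial F_n| \le C$ and $H$ is infinite with finite generating set $S_H$, the number of connected components of $\Cay(H,S_H) \setminus F_n$ is bounded. More precisely, each component of $H \setminus F_n$ that is nonempty contains a vertex adjacent to $F_n$, hence contributes at least one edge to $\partial F_n$; so $H \setminus F_n$ has at most $C$ connected components. Now $H$ is infinite (since $|F_n| \to \infty$ forces $H$ infinite), so $H \setminus F_n$ is infinite and nonempty, hence has at least one infinite component. Suppose for contradiction that $H \setminus F_n$ has exactly one infinite component $C_n$ for all large $n$; call the union of the finite components $D_n$, so $H = F_n \sqcup C_n \sqcup D_n$ with $D_n$ finite. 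Then $F_n' := F_n \cup D_n$ is finite, its complement $C_n$ is connected, and $\partial F_n' \subset \partial F_n$ so $|\partial F_n'| \le C$; also $|F_n'| \ge |F_n| \to \infty$. So after replacing $F_n$ by $F_n'$ we may assume each $F_n$ has connected complement.

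Next, with $F_n$ having connected complement $C_n = H \setminus F_n$ and $|\partial F_n| \le C$, I would derive a contradiction by a diameter/separation argument. Pick any basepoint $e \in H$. Since $|F_n| \to \infty$, the sets $F_n$ contain balls of growing size, or at least contain points arbitrarily far from one another; in particular there exist vertices $u_n \in F_n$ with $d(e, u_n) \to \infty$ (otherwise all $F_n$ lie in a fixed ball, contradicting $|F_n| \to \infty$ and local finiteness). Now I would use translation: $H$ acts on itself by left multiplication, preserving the metric, and the sets $g F_n$ also have boundary of size $\le C$ and size $\to \infty$. The key point is that for a vertex $v$ deep inside $F_n$ (far from $\partial F_n$ in the graph metric), any path from $v$ to infinity must cross $\partial F_n$, which has at most $C$ edges; but if $F_n$ is ``large in several directions'' this is impossible unless $F_n$ wraps around. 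The cleanest route: translate $F_n$ so that a fixed ball $B(e,R)$ is split — concretely, since $|F_n| \to \infty$ while $|\partial F_n| \le C$, for $n$ large the set $F_n$ is not contained in any $C$-edge-bounded thin tube, so one can find $g_n \in H$ such that $g_n F_n \cap B(e,R) \neq \emptyset$ and $(H \setminus g_n F_n) \cap B(e,R) \neq \emptyset$ for every fixed $R$; then pass to a subsequential limit of the indicator functions of $g_n F_n$ in the product topology on $\{0,1\}^H$ to obtain a set $F_\infty$ which is nonempty, has infinite complement, and has $|\partial F_\infty| \le C < \infty$ — a finite edge cut of the connected graph $\Cay(H,S_H)$ that is proper on both sides. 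Removing the (finitely many) edges of $\partial F_\infty$ from the Cayley graph disconnects it, and since both $F_\infty$ and its complement are infinite, this yields two ends of $H$.

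The main obstacle is making the translation/compactness step precise: ensuring one can choose translates $g_n F_n$ that genuinely ``straddle'' a fixed region rather than escaping to infinity, and verifying that the limiting set $F_\infty$ has \emph{both} sides infinite (not just one side, and not empty). This requires a careful counting argument showing that if all of $F_n$'s ``bulk'' stayed within bounded distance of $\partial F_n$ then $|F_n|$ would be bounded by a function of $C$ and the size of spheres in $\Cay(H,S_H)$ — which contradicts $|F_n| \to \infty$; hence $F_n$ contains points arbitrarily deep in its own interior, and translating such a deep point to the basepoint forces the straddling. Once $F_\infty$ with a finite proper two-sided edge cut exists, the conclusion that $H$ has at least two ends is immediate from the definition of $\End$.
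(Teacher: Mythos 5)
You have the right overall strategy, and it is essentially the paper's: absorb the finite complementary components, translate the sets $F_n$, take a subsequential limit of indicator functions in $\{0,1\}^H$, and exhibit a set $F_\infty$ with $|\partial F_\infty|\le C$ whose two sides are both infinite, which gives two ends. (Your framing of the middle step as a proof by contradiction is a harmless slip: ``the complement of $F_n$ has exactly one infinite component'' is not the negation of the conclusion, it is just the case left over after the easy one.) The genuine gap is in the step you yourself call the main obstacle, and your proposed resolution of it is incorrect. The straddling condition you ask for --- that $g_nF_n$ and its complement both meet $B(e,R)$ for every fixed $R$ and all large $n$ --- only makes the limit set $F_\infty$ and its complement nonempty; it does not make them infinite, and a finite nonempty $F_\infty$ with small edge boundary carries no information about ends. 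Worse, the mechanism you propose for achieving straddling, namely translating a point of $F_n$ that lies very deep in its interior to the basepoint, does exactly the opposite of what you want: if the chosen point is at distance greater than $R_n$ from every endpoint of $\partial F_n$ (such points do exist, by the counting you sketch), then the translated set contains the whole ball $B(e,R_n)$, so along the subsequence the complement escapes every fixed ball and the limit is $F_\infty=H$ with empty complement.

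The correct choice, and the one the paper makes, is to base the translation at the boundary rather than deep inside, and to use connectivity to certify that each side stays infinite in the limit. The induced subgraph on $F_n$ has at most $C$ components (each finite component emits at least one boundary edge), so its largest component $F_n^\circ$ satisfies $|F_n^\circ|\to\infty$; choose a boundary edge $(g_n,g_ns_n)$ with $g_n\in F_n^\circ$, choose an infinite ray $p_n$ in the complement starting at $g_ns_n$ (available since, after your reduction, the complementary components are infinite), and translate everything by $g_n^{-1}$. In the limit the $F$-side is infinite because $g_n^{-1}F_n^\circ$ is connected, contains $e$, and has size tending to infinity, hence meets every sphere about $e$ of radius up to some $R_n\to\infty$; the complementary side is infinite because the translated rays, which start at $s_n\in S_H$, converge to an infinite ray disjoint from $F_\infty$. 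With both sides infinite and $|\partial F_\infty|\le C$, your concluding step (removing the finitely many cut edges leaves an infinite component on each side, hence at least two ends) is fine. Without some such device, the limit you construct need not have two infinite sides, so as written the proof does not go through.
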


\begin{proof}

We identify each $F_n$ with its induced subgraph in $\Cay(H,S_H)$.  We may assume without loss of generality that every connected component of the complement $\Cay(H,S_H) \setminus F_n$ is infinite. This is because we may add all finite components of $\Cay(H,S_H) \setminus F_n$ to $F_n$ without increasing the size of its boundary. 

Choose elements $g_n \in F_n$, $s_n \in S_H$ so that
\begin{itemize}
\item $(g_n, g_ns_n) \in \partial F_n$
\item if $F^\circ_n \subset F_n$ is the connected component of $F_n$ containing $g_n$ then $\lim_{n\to\infty} |F_n^\circ| = +\infty$
\item there exists an infinite path $p_n \subset \Cay(H,S_H) \setminus F_n$ starting from $g_ns_n$.
\end{itemize}
Let $F'_n = g_n^{-1}F_n^\circ$ and $p'_n = g_n^{-1}p_n$. After passing to a subsequence if necessary, we may assume that $F'_n$ converges to a limit $F'_\infty$ and $p'_n$ converges to a limit $p'_\infty$ (in the topology of uniform convergence on compact subsets). We observe that $F'_\infty$ is infinite, $p'_\infty \subset \Cay(H,S_H) \setminus F'_\infty$ is an infinite path and $|\partial F'_\infty| \le C$. Thus the compact set $K:=\partial F'_\infty$ is such that there are at least two infinite components of $\Cay(H,S_H)\setminus K$ (namely, the component containing $p'_\infty$ and the component containing $F'_\infty$). This proves that $H$ has at least two ends. 

\end{proof}

\begin{prop}\label{prop:1-end}
Suppose $\Gamma$ is an infinite finitely generated group, $G=\F_r$ be a nonabelian free group, $\Gamma \cc (X,\mu)$ a probability measure-preserving action and $\alpha:\Gamma \times X \to G$ an $L^1$-embedding. Then $\Gamma$ has more than one end.
\end{prop}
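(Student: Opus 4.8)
\section*{Proof proposal}

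The plan is to argue by contradiction: since $\Gamma$ is infinite and finitely generated it has one, two, or infinitely many ends, so it suffices to rule out the one-ended case. Assume $\Gamma$ is one-ended. First I would apply Theorem~\ref{thm:ends} (which applies since an $L^1$-embedding is exactly the hypothesis there, and $\F_r$ is finitely generated) to extend $\alpha'$ to a map $\alpha'_x\colon\bar\Gamma\to\bG$ which for a.e.\ $x$ is continuous and satisfies the equivariance identity. As $\Gamma$ is one-ended, $\bar\Gamma=\Gamma\cup\{\xi_0\}$ for a single end $\xi_0$, and every neighborhood of $\xi_0$ in $\bar\Gamma$ is cofinite in $\Gamma$. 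Set $\eta_x:=\alpha'_x(\xi_0)\in\bG$. If $\eta_x$ were a vertex of $G$ on a positive-measure set, then applying continuity of $\alpha'_x$ at $\xi_0$ to a small neighborhood of $\eta_x$ would show that $\alpha'_x(g)=\eta_x$ for all but finitely many $g$, contradicting $\#\{g:\alpha'_x(g)=\eta_x\}\le C$ and $|\Gamma|=\infty$. Hence $\eta_x\in\End(G)=\partial\F_r$ for a.e.\ $x$, and continuity of $\alpha'_x$ at $\xi_0$ says exactly that $A(x):=\alpha'_x(\Gamma)\subset G$ converges to $\eta_x$: every neighborhood of $\eta_x$ contains all but finitely many points of $A(x)$.

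Next I would build the finite sets for Lemma~\ref{lem:2-end}. Fix the geodesic ray $\rho_x=(v_0,v_1,v_2,\dots)$ from the identity to $\eta_x$ in $\Cay(G,S_G)$, with edges $e_n=(v_n,v_{n+1})$, and let $T_{e_n}^{+}$, $T_{e_n}^{-}$ be the two half-trees obtained by deleting $e_n$, with $\eta_x$ on the $T_{e_n}^{+}$ side and $v_0$ on the $T_{e_n}^{-}$ side. Put $F_n(x):=\{g\in\Gamma:\alpha'_x(g)\in T_{e_n}^{-}\}$. Since $A(x)\to\eta_x$, only finitely many points of $A(x)$ lie in $T_{e_n}^{-}$, so $F_n(x)$ is finite; the $F_n(x)$ increase with $n$ and their union is $\Gamma$, so $|F_n(x)|\to\infty$. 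For the boundary: if an edge $(g,gs)$ of $\Cay(\Gamma,S_\Gamma)$ has exactly one endpoint in $F_n(x)$, then $\alpha'_x(g)$ and $\alpha'_x(gs)$ lie on opposite sides of $e_n$, so the flight $\gamma[\alpha'_x(g),\alpha'_x(gs)]$ crosses $e_n$ and in particular passes through $v_n$; hence $g\in F_{s,v_n}(x)$ in the notation of Lemma~\ref{lem:prox}, and
\[
|\partial F_n(x)|\ \le\ \Phi_{v_n}(x):=\sum_{s\in S_\Gamma}\#F_{s,v_n}(x).
\]

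The crux is to show that for a.e.\ $x$ one has $\liminf_{n\to\infty}|\partial F_n(x)|<\infty$. Granting this, choose a subsequence $n_k$ along which $|\partial F_{n_k}(x)|$ is bounded; then Lemma~\ref{lem:2-end} applied to $\Gamma$ (with the sets $F_{n_k}(x)$) shows $\Gamma$ has at least two ends, the desired contradiction. To control $\liminf_n|\partial F_n(x)|$ I would combine the flight estimate of Claim~1 with the cocycle relation. Unwinding the cocycle identity as in the proof of Claim~1 gives that $\Phi$ is equivariant, $\Phi_{\alpha(g,x)^{-1}v}(x)=\Phi_v(gx)$, and in particular $\Phi_{\alpha'_x(g)}(x)=\Phi_{e_G}(g^{-1}x)$, where $\Phi_{e_G}\in L^1(\mu)$ by Claim~1. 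The idea is then: for scales $n$ at which $v_n$ coincides with (or lies near) a point of $A(x)$, $\Phi_{v_n}(x)$ is controlled by the value of the fixed integrable function $\Phi_{e_G}$ along the $\Gamma$-orbit of $x$, and a Fatou/recurrence argument produces, for a.e.\ $x$, an infinite set of such scales along which this value stays bounded. I expect this step to be the main obstacle: the direct $L^1$-bound on $|\partial F_n(x)|$ coming from Claim~1 is \emph{not} uniform in $n$ (the associated count, after the measure-preserving change of variable, ranges over an exponentially growing annulus in the tree $\Cay(\F_r,S_G)$), so one genuinely has to use the convergence $A(x)\to\eta_x$ and the equivariance of $\Phi$ to select the right subsequence of scales. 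Once this estimate is established, the rest is bookkeeping.
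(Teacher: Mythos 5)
Your construction is, step for step, the paper's own: argue by contradiction assuming one end, use Theorem \ref{thm:ends} to get $\eta_x=\alpha'_x(\xi_0)\in\End(G)$, take the geodesic ray from $e$ to $\eta_x$, let $F_n(x)$ be the preimage under $\alpha'_x$ of the half-tree on the $e$-side of the $n$-th edge (this is exactly the paper's $F(n,x)$, defined there via the Gromov product), bound $|\partial F_n(x)|$ by the number of $s$-flights crossing that edge, i.e.\ by $\sum_s\#F_{s,v_n}(x)$, and feed the resulting sets into Lemma \ref{lem:2-end}. Your side remarks (that $\eta_x$ is genuinely an end, that $F_n(x)$ is finite, the equivariance identity $\Phi_{\alpha(g,x)^{-1}v}(x)=\Phi_v(gx)$) are correct. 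The difference is entirely at the decisive quantitative step, and there your proposal has a genuine gap: you explicitly decline to prove that $\liminf_n|\partial F_n(x)|<\infty$ (or any substitute), offering only the ``idea'' of a Fatou/recurrence argument along scales where $v_n$ is near $A(x)$, and you yourself label it the main obstacle. As written, the contradiction is never reached, so the proposal is an incomplete proof.

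For comparison, the paper closes exactly this step with its Claim 1 inside the proof of Proposition \ref{prop:1-end}: the bound
\[
\int |\partial F(n,x)|\,d\mu(x)\ \le\ C\sum_{s\in S_\Gamma}\int|\alpha(s,x)|_G\,d\mu(x)\ =:\ M,
\]
\emph{uniformly in $n$}, obtained by running the same measure-preserving change of variables as in Claim 1 of Lemma \ref{lem:prox}, now with the crossing point $r_n(x)$ depending measurably on $x$. Once one has this, your desired pointwise statement is immediate (Fatou gives $\liminf_n|\partial F(n,x)|<\infty$ a.e.), or one argues as the paper does by letting the point vary: choose $x_n$ with $|F(n,x_n)|\to\infty$ while $|\partial F(n,x_n)|$ stays bounded, then apply Lemma \ref{lem:2-end}. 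So the estimate you declare ``not uniform in $n$'' is precisely what the paper asserts and uses. Your worry — that after the change of variables the count ranges over an exponentially growing annulus — overlooks that the counted elements are of the form $\alpha'_x(g^{-1})$, hence lie in the image set $A(x)$ (with multiplicity at most $C$), and are further constrained by the requirement that the geodesic from $\alpha'_x(g^{-1})$ to $\phi(x)$ pass through the fixed finite segment $\gamma[e,\alpha(s^{-1},x)^{-1}]$; it is this constraint, not a raw annulus count, that drives the paper's bound. Whether or not you find the paper's one-line ``follows as in Lemma \ref{lem:prox}'' fully satisfying, the burden in your write-up was to supply the estimate, and the proposal does not: the recurrence argument is never carried out, and without it Lemma \ref{lem:2-end} cannot be invoked.
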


\begin{proof}

We fix a free generating set of $G$ from which we obtain a word metric and a Cayley graph (which is a regular tree since $G$ is a free group). We also fix a finite generating set $S_\Gamma$ for $\Gamma$.

To obtain a contradiction, we assume that $\End(\Gamma)=\{\xi\}$ is a singleton. Define $\phi:X \to \End(G)$ by $\phi(x)=\alpha'(\xi,x)$ where $\alpha'$ is as in Theorem \ref{thm:ends}. By Theorem \ref{thm:ends},
$$\phi(hx) = \alpha(h,x) \phi(x).$$
For $n\in \N, x\in X$, let $G(n,x)$ be the set of all $g \in G$ such that $(g| \phi(x))_e \le n$ where $(\cdot | \cdot)_e$ is the Gromov product. To be precise $(g | \phi(x))_e=d(e,m)$ where, if $g\ne e$, $m \in G$ is the `midpoint' of the geodesic triangle with vertices $\{g,\phi(x),e\}$. That is, $m$ is the unique element contained in all three geodesic sides of the triangle with vertices $\{g,\phi(x),e\}$. If $g=e$ then by definition $m=e$. Thus $G(n,x)$ is the set of all elements $g \in G$ such that the geodesic from $g$ to $\phi(x)$ contains a point of distance no more than $n$ from $e$. Let $F(n,x)=\{h\in \Gamma:~ \alpha'(h,x) \in G(n,x)\}$. 
\\
\noindent {\bf Claim 1}. $\int |\partial F(n,x)| ~d\mu(x) \le C\sum_{s\in S_\Gamma} \int |\alpha(s,x)|_G~d\mu(x) =:M$. Note $M$ is independent of $n$.

\begin{proof}[Proof of Claim 1]
Let $r_n(x) \in G$ be the unique element satisfying
$$d(e,r_n(x)) = n = (r_n(x)| \phi(x))_e.$$
In other words, $n \mapsto r_n(x)$ is the geodesic from $e$ to $\phi(x)$. Observe that $\partial G(n,x)$ is the unique edge from $r_n(x)$ to $r_{n+1}(x)$.

By definition $\partial F(n,x)$ consists of all edges of the form $(g,gs)$ such that $g \in F(n,x)$ and $gs \notin F(n,x)$ ($s\in S$). Equivalently, $\alpha'_x(g) \in G(n,x)$ and $\alpha'_x(gs) \notin G(n,x)$. Equivalently, the $s$-flight from $\alpha'_x(g)$ to $\alpha'_x(gs)$ flies over $r_n(x)$. The claim now follows as in the proof of Lemma \ref{lem:prox}, Claim 1. 
\end{proof}

\noindent {\bf Claim 2}. For every $n\in \N$ and a.e. $x\in X$, $|F(n,x)| <\infty$.

\begin{proof}
The previous claim implies $\partial F(n,x)$ is finite for a.e. $x$. Because $\Gamma$ is 1-ended, for a.e. $x\in X$ either $F(n,x)$ or $\Gamma \setminus F(n,x)$ is finite. Because $\alpha': \bar{\Gamma}\times X \to \bar{G}$ is continuous and $\alpha'(\xi,x) \notin \overline{G(n,x)} = \overline{\alpha'(F(n,x),x)}$, it follows that $\Gamma \setminus F(n,x)$ must be infinite and therefore $F(n,x)$ is finite.
\end{proof}

Observe that $G(n,x)\subset G(n+1,x)$ and $\cup_{n\ge 0} G(n,x) = G$. Therefore $F(n,x) \subset F(n+1,x)$ for all $n$ and $\cup_{n\ge 0} F(n,x) = \Gamma$ which in particular implies that $\lim_{n\to\infty} |F(n,x)| = +\infty$. 

Because
$$\lim_{n\to\infty} \int |F(n,x)|~d\mu(x) = +\infty, \quad \int |\partial F(n,x)|~d\mu(x) \le M,$$
we can choose $x_n \in X$ so that $|F(n,x_n)| \to \infty$ while $|\partial F(n,x_n)|$ stays bounded. Lemma \ref{lem:2-end} now implies that $\Gamma$ has at least 2 ends, a contradiction.  
\end{proof}

\begin{proof}[Proof of Theorem \ref{thm:main}]
By assumption there exists an $L^1$-embedding $\alpha:\Gamma \times X \to G$ and $G$ is a free group. Since $\Gamma$ is accessible, we may write it as the fundamental group of a finite graph of groups in which each edge group is finite and each vertex group has $\le 1$ end. By Lemma \ref{lem:qi} each vertex group $H$ quasi-isometrically embeds into $\Gamma$. So if we restrict $\alpha$ to $H \times X$, it is still an $L^1$-embedding. So Proposition \ref{prop:1-end} implies that $H$ is not 1-ended. So every vertex group and edge group in the graph of groups decomposition of $\Gamma$ is finite. This implies that $\Gamma$ is virtually free by Theorem \ref{thm:v-free}.

\end{proof}

\bibliography{biblio}
\bibliographystyle{alpha}









\end{document}